\newtheorem{thm}{Theorem}[]
\newtheorem{lem}{Lemma}[section]
\theoremstyle{definition}
\numberwithin{equation}{section} \theoremstyle{remark}
\title[Convergence rate for Ginibre]{\bf  Exact convergence rate of spectral radius of complex Ginibre to Gumbel distribution}
\author{Y{utao} Ma}
\address{Yutao MA\\ School of Mathematical Sciences $\&$ Laboratory  of Mathematics and Complex Systems of Ministry of Education, Beijing Normal University, 100875 Beijing, China.} 
\thanks{The research of Yutao Ma was supported in part by NSFC 12171038 and 985 Projects.}
\email{mayt@bnu.edu.cn}
\author{Xujia Meng}
\address{Xujia Meng\\ School of Mathematical Sciences $\&$ Laboratory of Mathematics and Complex Systems of Ministry of Education, Beijing Normal University, 100875 Beijing, China.}
\email{202321130122@mail.bnu.edu.cn}
\begin{document}
\maketitle

\begin{abstract}

Consider the complex Ginibre ensemble, whose eigenvalues are $(\lambda_i)_{1\le i\le n}$ and the  
spectral radius $R_n=\max_{1\le i\le n}|\lambda_i|.$ Set $X_n=\sqrt{4 \gamma_{n}}(R_{n}-\sqrt{n}-\frac12\sqrt{\gamma_{n}})$ and $F_n$ be its distribution function, where 
$\gamma_{n}=\log n-2\log(\sqrt{2\pi}\log n).$ It was proved in \cite{Rider 2003} that $F_n$ converges weakly to the Gumbel distribution $\Lambda.$ We prove in further in this paper  that 
$$\lim_{n\to\infty} \frac{\log n}{\log\log n}\, W_1\left(F_n, \Lambda\right)=2$$
   and the Berry-Esseen bound 
      $$\lim\limits_{n\to \infty} \frac{\log n}{\log\log n}\sup_{x\in \mathbb{R}}|F_{n}(x)-e^{-e^{-x}}|=\frac{2}{e}.$$
	\end{abstract} 

{\bf Keywords:} Ginibre ensemble; Gumbel distribution; spectral radius;  Wasserstein distance; Berry-Esseen bound.  

{\bf AMS Classification Subjects 2020:} 60F10, 60B20, 60G55

\section{Introduction}	

Random Matrix Theory (RMT) originated from the study of the energy levels of large particles in quantum mechanics \cite{Miodrag 2011}. Quantum Hamiltonians are naturally self-adjoint, which leads to traditional random matrix models being Hermitian. As a result, early research  concentrated on Hermitian random matrices, like the Wigner matrix. In 1965, motivated by mathematical interest, Ginibre broadened this scope by exploring non-Hermitian Gaussian ensembles with real, complex, or quaternionic entries \cite{ginibre}.
		
	The Ginibre ensemble (\cite{ginibre}) is the simplest and most commonly used prototype of non-Hermitian random matrices, which consists of $n\times n$ random matrices $X$ whose entries are independent, identically distributed (i.i.d.) standard real Gaussian or complex Gaussian. The Ginibre ensemble has numerous practical applications across different domains. In statistical physics, it has been  utilized to investigate diffusion processes, persistence, and the equilibrium counting of random nonlinear differential equations, which help in analyzing the stability of complex systems \cite{Byun 2025}. In quantum physics \cite{Byun 2023}, the Ginibre ensemble is employed to describe topologically driven parameter level crossings in certain quantum dots.  Furthermore, it has been employed to examine financial markets, as demonstrated in reference \cite{Biely 2008}. 	
	
	A significant finding related to the Ginibre ensemble is that the empirical measures of eigenvalues converges to the Girko's circular law, whose convergence rate with respect to $W_1$ distance was obtained in \cite{Meckes 2015}, and proven in generality without Gaussian assumption \cite{Bai97, Mehta 1967, Girko, Tao-Vu}. The reader also can find the (mesoscopic) central limit theorem for linear eigenvalue statistics related to the real or complex Ginibre ensembles in \cite{BCH24, Cipo23, Forrester99, RiderSil06, RiderVirag07, TaoVu15}  etc.
	Meanwhile, the spectral radius converges to $1$ \cite{9, 11, 12, 20} with an explicit speed \cite{7}. 
		
Let $G_n$ be an $n\times n$ random matrix with i.i.d. standard complex Gaussian entries and let $\{\lambda_i\}_{1\le i\le n}$ be its eigenvalues.  Let $R_n$ be the spectral radius of $G_n,$ which is defined as $ R_{n}=\max\limits_{1\leq k\leq n}|\lambda_{k}|.$ For convenience, denote 
$$X_n:=\sqrt{4 \gamma_{n}}\big(R_{n}-\sqrt{n}-\sqrt{\frac{\gamma_{n}}{4}}\big),$$
	where $\gamma_{n}=\log n-2\log(\sqrt{2\pi}\log n).$
	Rider  \cite{Rider 2003} investigated the weak convergence of $R_n,$ which says 
		$$\lim\limits_{n\to \infty} \mathbb{P}\left(X_{n}\leq x\right)=e^{-e^{-x}},$$
where  $ e^{-e^{-x}}$ is the  distribution function of the Gumbel distribution $ \Lambda $.
Lacking radial symmetry, which is a key element of Kostlan's observation, the analogous result for the real Ginibre ensemble required a much more sophisticated analysis by Rider and Sinclair in \cite{Rider 14}. They demonstrated that the limiting distribution of the largest real eigenvalue and the largest imaginary eigenvalue of the real Ginibre ensemble converge, and based on this, they proved that in the case of the real Ginibre ensemble, $X_{n}$
converges to a slightly rescaled Gumbel distribution with distribution function   $ e^{-\frac{1}{2}e^{-x}}.$ Similar results for the largest real part of the eigenvalues were obtained for real and complex Ginibre ensembles in \cite{Akemann 2014} and \cite{Bender 2010}, respectively and	
the latest work \cite{Cipolloni 2022} provides the convergence rate of order $\frac{(\log\log n)^{2}}{\log n}$ for the largest real part of real and complex cases. 

The Gumbel distribution has universality in large amount of random matrices with i.i.d. entries. For example, let $M_n$ be a  complex random matrix with iid entries $x_{a b}\stackrel{d}{=}n^{-1/2}\chi$ and $\chi$ satisfies 
$$\mathbb{E}\chi=0, \quad \mathbb{E}|\chi|^2=1, \quad \mathbb{E}\chi^2=0, \quad \mathbb{E}|\chi^p|\le c_p$$ for any $p\in \mathbb{N}.$  
Recently, Cipolloni {\it et al.} \cite{CipoErdoXu23} proved that the spectral radius, with the same scaling as complex Ginibre ensemble, converges weakly to the Gumbel and similar asymptotic holds for real case, which solves the long-standing conjecture by Bordenave and Chafa$\ddot{\rm \imath}$ \cite{28}, \cite{42}(see also \cite{43}, \cite{Rider 14}).    

A natural and important subsequent object is to find out the speed of convergence to the Gumbel distribution. In this paper, thanks to Kostlan's observation again, we are able to give the exact convergence rate for the spectral radius of the complex Ginibre ensemble. 

We first introduce the $W_1$-distance on $\mathbb{R}$. 
The $W_{1} $ distance between probability measures  $ \mu $ and $ \nu $ on $ \mathbb{R}$ (see \cite{Villani}) is defined as
	$$W_{1}(\mu,\nu)=\mathop{\inf\limits_{X\sim\mu}}\limits_{Y\sim\nu}\mathbb{E}d(X, Y), $$
	where the infimum is taken over all couplings of random variables	$ X $ and 	$ Y  $ with marginal distributions 
	$ \mu $ and $ \nu $, respectively.  For this particular case, there is a closed formula for $W_1$ saying 	
\begin{equation}\label{W1}
	W_{1}(\mu, \nu)=\int_{\mathbb{R}}| F_{\mu}(t)-F_{\nu}(t)|dt,
\end{equation}
where $F_{\mu}$ and $F_{\nu}$ 
are the  distribution functions of 
$ \mu $ and 
$ \nu  $, respectively. 
	
We primarily investigate the convergence rate of the spectral radius $R_n$ with respect to $W_1.$ 
\begin{thm}\label{TH}
	Let 
	$ G_{n} $ 
		be a complex Ginibre ensemble and $R_n$ be its spectral radius.  
	Let $F_n$ be the distribution function of $X_{n}=\sqrt{4 \gamma_{n}}\left(R_{n}-\sqrt{n}-\sqrt{\frac{\gamma_{n}}{4}}\right)$ with $\gamma_{n}=\log n-2\log(\sqrt{2\pi}\log n).$	
	Then 
	$$ \lim_{n\to \infty} \frac{\log n}{\log \log n} W_{1}\left(F_{n},\Lambda\right)=2.$$
	
\end{thm}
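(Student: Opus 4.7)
The plan exploits Kostlan's theorem for the complex Ginibre ensemble, which states that the moduli $\{|\lambda_i|^2\}_{1\le i\le n}$ have the same joint law as $\{Y_k^2\}_{1\le k\le n}$ with $Y_k^2$ independent and $\Gamma(k,1)$-distributed. This identity immediately factorises the distribution function:
$$F_n(x)=\prod_{k=1}^{n}\bigl(1-p_{n,k}(x)\bigr),\qquad p_{n,k}(x)=\mathbb{P}\bigl(Y_k^2>r_n^2(x)\bigr),$$
where $r_n(x)=\sqrt{n}+\tfrac12\sqrt{\gamma_n}+x/(2\sqrt{\gamma_n})$. By the Poisson--Gamma duality, $p_{n,k}(x)=\mathbb{P}(\mathrm{Poi}(r_n^2(x))<k)$, so the analysis reduces to Poisson tail probabilities. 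Since $p_{n,k}$ is increasing in $k$, the effective summation range in $\sum_{k=1}^n p_{n,k}$ is a window of width $O(\sqrt{n/\gamma_n})$ just below $k=n$.

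The heart of the proof is to push Rider's asymptotic $\sum_{k=1}^n p_{n,k}(x)\to e^{-x}$ by one order to obtain
$$\sum_{k=1}^{n} p_{n,k}(x)=e^{-x}+\alpha(x)\,\frac{\log\log n}{\log n}+o\!\left(\frac{\log\log n}{\log n}\right)$$
for an explicit function $\alpha$. This would be done by writing $k=n-\ell$, applying Stirling together with an Edgeworth-type refinement of the Poisson tail, and then converting the sum over $\ell$ to an integral via Euler--Maclaurin. The precise choice $\gamma_n=\log n-2\log(\sqrt{2\pi}\log n)$ is engineered exactly so as to cancel the leading $1/\sqrt{\gamma_n}$ corrections, leaving $\log\log n/\log n$ as the first surviving order.

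Since each $p_{n,k}$ is at most $O(\sqrt{(\log n)/n})$, the elementary expansion $\log(1-p)=-p+O(p^2)$ gives $\log F_n(x)=-\sum_k p_{n,k}(x)+o(\log\log n/\log n)$, and hence
$$F_n(x)-\Lambda(x)=-\Lambda(x)\,\alpha(x)\,\frac{\log\log n}{\log n}+o\!\left(\frac{\log\log n}{\log n}\right).$$
Using the formula \eqref{W1}, I would then integrate this pointwise expansion. Justifying the exchange of limit and integral requires uniform tail control: for $x\to+\infty$ both $1-F_n$ and $1-\Lambda$ decay exponentially; for $x\to-\infty$, a uniform-in-$n$ lower bound $\sum_k p_{n,k}(x)\ge c\,e^{-x}$ gives $F_n(x)\le \Lambda(x)^c$, which is integrable. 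The proof concludes by verifying the clean identity $\int_{\mathbb{R}}\Lambda(x)|\alpha(x)|\,dx=2$; as a consistency check, the Berry--Esseen version stated in the abstract predicts $\sup_x\Lambda(x)|\alpha(x)|=2/e$.

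The main obstacle is the second step: obtaining the next-to-leading order asymptotics of $\sum_k p_{n,k}(x)$ with the coefficient $\alpha(x)$ identified exactly and with a remainder controlled uniformly in $x$. Rider's original calculation stops at the leading term because the standard Mills-ratio estimate discards all sub-leading information; pushing one order further requires a simultaneous refinement of the Poisson local limit theorem and of the Euler--Maclaurin correction in the discrete sum, with the two effects combining without any cancellation being mislaid. Once $\alpha$ is pinned down, the final integration against $\Lambda$ to recover the constant $2$ is a direct analytic computation.
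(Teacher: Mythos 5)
Your proposal follows essentially the same route as the paper: Kostlan's reduction to independent Gamma variables, the product representation $F_n(x)=\prod_k(1-p_{n,k}(x))$ (equivalently the paper's $\beta_n(x)=-\sum_k\log\mathbb{P}(Y_k\le a_n+b_nx)$), a next-to-leading-order expansion of $\sum_k p_{n,k}(x)$ whose correction turns out to be $\approx 2e^{-x}\,\frac{\log\log n}{\log n}$, and finally integration of $|F_n-\Lambda|$ against this. Where the paper uses Petrov's refined CLT for the Gamma law (viewed as a sum of $n-k$ exponentials) together with the Mills ratio and a discrete-to-integral conversion (Lemmas~\ref{ZX1} and~\ref{3.1}), you propose Poisson--Gamma duality with an Edgeworth correction plus Euler--Maclaurin; these are interchangeable tools, and your reading of the answer (the $\alpha$ with $\int\Lambda|\alpha|=2$ and $\sup\Lambda|\alpha|=2/e$, coming from $\alpha(x)=2e^{-x}$) is exactly what the paper's computation \eqref{betaxexpre}--\eqref{IIfinal2} produces. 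One small correction: the uniform-in-$n$ lower bound $\sum_k p_{n,k}(x)\ge c\,e^{-x}$ you invoke for the left tail cannot hold as $x\to-\infty$, since the left side is at most $n$ while $e^{-x}$ is unbounded; the paper instead uses that $F_n$ vanishes on $(-\infty,-\sqrt{n\gamma_n}-\gamma_n)$ and is exponentially small on the rest of the left tail (the estimate on $\rm I$), which repairs your argument without changing the strategy.
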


Next, we state the Berry-Esseen bound between $F_n$ and $\Lambda.$
\begin{thm}\label{th2}
Let $X_{n}$ be the same as above and $F_{n}$ be its distribution function.	
 Then
	$$\lim\limits_{n\to \infty} \frac{\log n}{\log \log n}\sup_{x\in \mathbb{R}}|F_{n}(x)-e^{-e^{-x}}|=\frac{2}{e}.$$
	\end{thm}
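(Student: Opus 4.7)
The plan is to use Kostlan's observation to rewrite $F_n$ as a product of incomplete-gamma factors, extract a uniform asymptotic for $-\log F_n$ with error of order $o(\log\log n/\log n)$, and then read off the Kolmogorov constant from the elementary maximum of $ue^{-u}$. Kostlan's theorem gives $\{|\lambda_k|^2\}_{k=1}^n\stackrel{d}{=}\{Y_k\}_{k=1}^n$ with $Y_k\sim\mathrm{Gamma}(k,1)$ independent, so with $r=r_n(x):=\sqrt n+\tfrac{1}{2}\sqrt{\gamma_n}+x/(2\sqrt{\gamma_n})$ and $P(k,\cdot),\,Q(k,\cdot):=1-P(k,\cdot)$ the regularised lower/upper incomplete gamma functions,
$$F_n(x)=\prod_{k=1}^n P(k,r^2),\qquad -\log F_n(x)=S_n(x)+O\!\Big(\sum_{k=1}^n Q(k,r^2)^2\Big),$$
where $S_n(x):=\sum_{k=1}^n Q(k,r^2)$; the whole argument reduces to a sharp uniform-in-$x$ expansion of $S_n$.

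Next I would observe that only indices $k$ close to $n$ contribute (for $k$ small, $r^2\gg k$ makes $Q(k,r^2)$ doubly exponentially small) and, writing $k=n-j$, apply Mill's ratio to the Gaussian tail approximation $Q(k,r^2)\approx\Phi(-\sigma_{j,x})$ with
$$\sigma_{j,x}:=(r^2-k)/\sqrt k=\sqrt{\gamma_n}+x/\sqrt{\gamma_n}+j/\sqrt n+\text{lower order.}$$
This should give
$$Q(n-j,r^2)=\frac{e^{-\gamma_n/2}}{\sqrt{2\pi\gamma_n}}\,e^{-x}\,e^{-j\sqrt{\gamma_n/n}}\bigl(1+O(1/\log n)\bigr)$$
uniformly in $0\le j\le n^{1/2+\epsilon}$ and $|x|\le A\log\log n$. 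Summing the geometric series in $j$ (which produces $\sqrt{n/\gamma_n}+O(1)$) and using $e^{-\gamma_n/2}=\sqrt{2\pi}\log n/\sqrt n$ yields
$$S_n(x)=\frac{\log n}{\gamma_n}\,e^{-x}\bigl(1+o(\varepsilon_n)\bigr)=e^{-x}\bigl(1+\varepsilon_n+o(\varepsilon_n)\bigr),\qquad \varepsilon_n:=\frac{2\log\log n}{\log n},$$
after substituting $\gamma_n=\log n-\log(2\pi)-2\log\log n$ to read off the coefficient $2$.

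Exponentiating and expanding, in the same $x$-window,
$$F_n(x)-\Lambda(x)=\Lambda(x)\bigl(e^{-\varepsilon_n(1+o(1))e^{-x}}-1\bigr)=-\varepsilon_n(1+o(1))\,e^{-x}\Lambda(x).$$
Setting $u=e^{-x}$, the function $g(x):=e^{-x}\Lambda(x)=ue^{-u}$ is maximised at $u=1$, i.e.\ $x=0$, with maximum $1/e$. Together with elementary tail estimates (for $x\ge A\log\log n$ one has $|F_n-\Lambda|\le 2(1-\Lambda)=O((\log n)^{-A})$; for $x\le-A\log\log n$ both distribution functions are super-exponentially small) confirming that the global sup is attained inside the window, this gives
$$\sup_{x\in\mathbb R}|F_n(x)-\Lambda(x)|=\frac{\varepsilon_n}{e}\bigl(1+o(1)\bigr)=\frac{2\log\log n}{e\log n}\bigl(1+o(1)\bigr),$$
which is Theorem~\ref{th2}.

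The main obstacle is producing the uniform asymptotic for $S_n$ with error strictly of order $o(\varepsilon_n)$: one-term Mill's ratio on its own only gives a $1+O(1/\log n)$ factor, so the $x$- and $j$-dependence in $\sigma_{j,x}$ and the difference between $\sqrt{n-j}$ and $\sqrt n$ must be tracked carefully enough to confirm these corrections are genuinely $O(1/\log n)=o(\varepsilon_n)$, and the geometric summation over $j$ must be controlled with matching precision (since $\sqrt{\gamma_n/n}\to 0$). Once this uniform expansion is in place, the constant $2/e$ emerges from the elementary maximisation of $ue^{-u}$; compare with Theorem~\ref{TH}, where the same formula is integrated and $\int_0^\infty ue^{-u}\,du=1$ produces the $W_1$ constant $2$, while here the maximum $1/e$ produces the Kolmogorov constant $2/e$.
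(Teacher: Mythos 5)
Your strategy is essentially the paper's: Kostlan reduces $F_n$ to a product of Gamma tails, a Gaussian/Mills-ratio expansion of $Q(n-j,r^2)$ followed by a geometric sum in $j$ gives $-\log F_n(x)\approx e^{-x}(1+\tfrac{2\ell_2(n)}{\gamma_n})$ with $\ell_2(n)=\log(\sqrt{2\pi}\log n)$, and the constant $2/e$ comes from $\max_u ue^{-u}=1/e$ exactly as $\int_0^\infty ue^{-u}\,du=1$ gave the $W_1$ constant. The paper's small preprocessing (passing from $X_n$ to $W_n$ built on $R_n^2$) is cosmetic and you can equally well work with $r^2$ directly.

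The genuine gap is the uniformity claim
\[
Q(n-j,r^2)=\frac{e^{-\gamma_n/2}}{\sqrt{2\pi\gamma_n}}e^{-x}e^{-j\sqrt{\gamma_n/n}}\bigl(1+O(1/\log n)\bigr)
\quad\text{for }|x|\le A\log\log n.
\]
This is false: $\sigma_{j,x}^2/2=\gamma_n/2+x+x^2/(2\gamma_n)+\cdots$, so the tail carries an extra factor $e^{-x^2/(2\gamma_n)}$, which for $|x|\sim\log\log n$ contributes a relative correction of size $(\log\log n)^2/\log n$, not $O(1/\log n)$; similarly the prefactor $\sigma_{j,x}^{-1}$ contributes $(1+x/\gamma_n)^{-1}$, a relative correction of size $\log\log n/\log n$. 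Neither is $o(\varepsilon_n)$ throughout your window, so $S_n(x)=e^{-x}(1+\varepsilon_n+o(\varepsilon_n))$ does \emph{not} hold uniformly on $|x|\le A\log\log n$, but only for $|x|\ll\sqrt{\log\log n}$. Consequently your step $F_n(x)-\Lambda(x)=-\varepsilon_n(1+o(1))e^{-x}\Lambda(x)$ is not justified over the full window, and you have no argument covering $\sqrt{\log\log n}\lesssim|x|\lesssim\log\log n$.

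What the paper does instead is keep the $x$-dependence explicit: its Eq.\ (3.8)–(3.9) shows $\beta_n(x)=e^{-x}\bigl(1+\gamma_n^{-1}(1+o(1))(2(\ell_2(n)-x)-x^2/2)\bigr)$, and the Kolmogorov distance becomes $\gamma_n^{-1}(1+o(1))\sup_x e^{-e^{-x}-x}\bigl|2(\ell_2(n)-x)-x^2/2\bigr|$. One then shows the supremum is attained at some $z_0\in(-1/\ell_2(n),0)$, where the $-2x-x^2/2$ corrections are $O(1)\ll\ell_2(n)$, so the answer collapses to $2\ell_2(n)/e$ and hence $2/e$ after dividing by $\gamma_n$ and multiplying by $\log n/\log\log n$. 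Your final answer is therefore correct, and your intuition about where the max sits is right, but as written the proof needs the full $2(\ell_2(n)-x)-x^2/2$ expansion (or a separate argument killing the intermediate range of $x$) rather than the flat $1+\varepsilon_n$ expansion.
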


The remainder of this paper will be organized as follows: the second section is concentrated on preparations and the third section is devoted to the proof of Theorem \ref{TH} and we give a brief proof of Theorem \ref{th2} in the last section. 

Hereafter, we use frequently $t_n=O(z_n)$ or $t_n=o(z_n)$ if $\lim_{n\to\infty}\frac{t_n}{z_n}=c\neq 0$  or $\lim_{n\to\infty}\frac{t_n}{z_n}=0,$ respectively and $t_n=\tilde{O}(z_n)$ if $\lim_{n\to\infty}\frac{t_n}{z_n}$ exists.  We also use $t_n\ll z_n$ or equivalently $z_n\gg t_n$ to represent $\lim_{n\to\infty}\frac{t_n}{z_n}=0.$

\section{Preparation work} 

In this section, we do some preparations for the proof of Theorem \ref{TH}. 

Let $ G_{n} $ be a complex
	Ginibre ensemble and $(\lambda_1, \cdots, \lambda_n)$ be its eigenvalues.  The exact joint density function of the eigenvalues of the  matrix $ G_{n} $ was derived by Ginibre \cite{ginibre}   
	\begin{equation}\label{pn}
		f_{n}(z_{1},\dots,z_{n})=\dfrac{1}{Z_{n}}e^{-\sum_{k=1}^{n}|z_{k}|^{2}}\prod\limits_{1\leq j<k\leq n}|z_{j}-z_{k}|^{2}, 
	\end{equation} 
	where $ Z_{n} $ is the normalizer.  Recall that
 $ R_{n}=\max\limits_{1\leq k\leq n}|\lambda_{k}|. $

The first crucial lemma is based on Corollary 1.2 in \cite{Kostlan}, which transfers the corresponding probabilities of 
 $R_n$ to those related to independently random variables taking values in real line. 
    
\begin{lem}\label{RY}
	Let $ \{Y_{j}\}_{j\ge 1}$ be a sequence of independent random variables, whose density function is  proportional to 
	 $y^{j-1}e^{-y}1_{y>0}$ and $Y_{(n)}:=\max\limits_{1\leq k\leq n}Y_{k}.$ Let $R_{n}$ be defined as above.  Then, $R_{n}^{2}$ has the same distritution as  $Y_{(n)}.$
\end{lem}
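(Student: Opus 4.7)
The plan is to recover Kostlan's observation directly from the joint density \eqref{pn}. I would pass to polar coordinates $z_k = r_k e^{i\theta_k}$, which contributes a Jacobian $\prod_k r_k$, and then expand the Vandermonde factor as a squared determinant
$$\prod_{1 \le j < k \le n} |z_j - z_k|^2 = \Bigl| \det\bigl(z_j^{k-1}\bigr)_{j,k=1}^n \Bigr|^2 = \sum_{\sigma, \tau \in S_n} \mathrm{sgn}(\sigma)\,\mathrm{sgn}(\tau) \prod_{k=1}^n z_k^{\sigma(k)-1}\,\bar{z}_k^{\tau(k)-1}.$$
Integrating out the angles $\theta_1, \ldots, \theta_n \in [0,2\pi)^n$ kills every term except those with $\sigma = \tau$, because $\int_0^{2\pi} e^{i(\sigma(k)-\tau(k))\theta_k}\,d\theta_k = 2\pi\,\delta_{\sigma(k),\tau(k)}$. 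After the change of variable $u_k = r_k^2$, the resulting joint density of the squared moduli $(|\lambda_1|^2,\ldots,|\lambda_n|^2)$ is a symmetric function of the form
$$g_n(u_1, \ldots, u_n) = C_n \, e^{-\sum_k u_k} \sum_{\sigma \in S_n} \prod_{k=1}^n u_k^{\sigma(k)-1}, \qquad u_k > 0,$$
for an explicit constant $C_n$ obtained by combining $Z_n^{-1}$, the factors $(2\pi)^n$ coming from the angular integrals, and $2^{-n}$ coming from the Jacobian of $u_k = r_k^2$.

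Next I would compare $g_n$ with the symmetrization of the joint density of $(Y_1, \ldots, Y_n)$. The independent product $\prod_j \frac{y_j^{j-1}}{(j-1)!} e^{-y_j}$ describes a labeled sample, and averaging over the $n!$ assignments of the labels $\{1,\dots,n\}$ to the coordinates yields a symmetric density on $(0,\infty)^n$ proportional to $e^{-\sum_k u_k}\sum_{\sigma\in S_n} \prod_k u_k^{\sigma(k)-1}$, i.e. $g_n$ up to constants. Since both unordered multisets $\{|\lambda_k|^2\}_{k=1}^n$ and $\{Y_j\}_{j=1}^n$ therefore share the same law, and the maximum is a symmetric functional of the underlying multiset, one immediately concludes $R_n^2 \stackrel{d}{=} Y_{(n)}$.

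The main obstacle is essentially bookkeeping: one must check that the normalizer $Z_n$ in \eqref{pn} equals the product $\prod_{j=1}^n (j-1)! \cdot \pi^n$ (the well-known Selberg-type evaluation for the complex Ginibre), so that the permanent density $g_n$ matches exactly the symmetrization of independent Gammas with no leftover scalars. Alternatively one can bypass the normalizer computation by noting that both $g_n$ and the symmetrized Gamma density are symmetric probability densities proportional to the same explicit symmetric polynomial times $e^{-\sum_k u_k}$, so they must coincide. Either route yields the equality in law of $R_n^2$ and $Y_{(n)}$.
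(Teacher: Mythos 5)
Your argument is correct and reconstructs the standard proof of Kostlan's observation; the paper itself does not prove Lemma~\ref{RY} but simply cites Corollary~1.2 of \cite{Kostlan}, so what you wrote is a self-contained derivation of a fact the authors treat as known. The chain of steps (polar coordinates with Jacobian $\prod_k r_k$, squared-Vandermonde-as-sum-over-pairs-of-permutations, angular integration collapsing $\tau=\sigma$, substitution $u_k=r_k^2$, identification with the symmetrized Gamma$(k)$ density, and the observation that $\max$ is a symmetric functional) is exactly Kostlan's proof, and the shortcut of avoiding the explicit value of $Z_n$ by matching two probability densities proportional to the same symmetric function is sound.

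One small slip worth flagging: the normalizer is $Z_n=\pi^n\, n!\,\prod_{j=1}^n (j-1)!$, not $\pi^n\prod_{j=1}^n(j-1)!$ as you wrote. Tracking the constants through your own computation, the density of $(u_1,\dots,u_n)$ comes out as $\frac{\pi^n}{Z_n}e^{-\sum u_k}\sum_{\sigma\in S_n}\prod_k u_k^{\sigma(k)-1}$, whose integral over $(0,\infty)^n$ equals $\frac{\pi^n}{Z_n}\,n!\prod_{j=1}^n(j-1)!$; setting this to $1$ recovers the missing $n!$. The symmetrized Gamma density likewise carries a $\frac{1}{n!}$ prefactor from averaging over label assignments, so the two sides still match exactly. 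Since you explicitly provided the normalization-free alternative at the end, this arithmetic slip does not affect the validity of the argument; just correct the displayed value of $Z_n$ if you keep the first route.
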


We first recall our target 
$$X_n:=\sqrt{4 \gamma_{n}}\big(R_{n}-\sqrt{n}-\frac12\sqrt{\gamma_{n}}\big)$$ 
with $F_n$ being its distribution function and then we write $X_n$ as a function of $R_n^2$ 
\begin{equation}\label{Xn2}X_n=\frac{\sqrt{4 \gamma_{n}}\big(R^{2}_{n}-(n+\sqrt{n\gamma_n}+\frac{1}{4}\gamma_{n})\big)}{R_{n}+\sqrt{n}+\frac12\sqrt{\gamma_{n}}}.\end{equation}  
The fact $X_n$ converging weakly to $\Lambda$ implies that $\frac{R_n}{\sqrt{n}}\to 1$ weakly, which is then equivalent to say 
$$W_n:=\frac{\sqrt{\gamma_{n}}\big(R^{2}_{n}-(n+\sqrt{n\gamma_n})\big)}{\sqrt{n}}\to \Lambda \quad \text{weakly}$$
as $n\to\infty.$ Here, the denominator in \eqref{Xn2} is replaced by $2\sqrt{n}$ because $\frac{R_n}{\sqrt{n}}\to 1$ weakly and the term $\frac{1}{4}\gamma_{n}$ disappears in the numerator since $\gamma_n=o(\sqrt{n}).$

Let $\mathcal{L}(W_n)$ be the distribution of $W_n.$ For the target $W_1(F_n, \Lambda),$ we first study $W_1(\mathcal{L}(W_n), \Lambda),$ which is easier to be figured out. Setting for simplicity $$ a_{n}:=n+\sqrt{n\gamma_{n}} , \quad b_{n}:=\sqrt{\frac{n}{\gamma_{n}}},$$ 
one gets 
$$\mathbb{P}(W_n\ge x)=\mathbb{P}(R_n^2\ge a_n+b_n x)=\mathbb{P}(Y_{(n)}\ge a_n+b_n x).$$ 
Hence, we next work on  
$\mathbb{P}(Y_{(n)}\ge a_n+b_n x).$ 
\begin{lem}\label{ZX1}
	Given $0\leq k\ll n $ and set 
	$$u_{n}(k,x):=\frac{k}{\sqrt{n}}+\sqrt{\gamma_{n}}+\frac{x}{\sqrt{\gamma_{n}}} .$$ Then $$
		\mathbb{P}(Y_{n-k}> a_{n}+b_{n}x)\le\frac{1}{u_{n}(k,x)}e^{-\frac{u_{n}^{2}(k,x)}{2}} +O(n^{-1/2}u_{n}^{-3}(k,x))
	$$
	uniformly on $k $ and  $ x $ such that $ u_{n}(k,x)\gg 1 $. 	Furthermore, when $ 1\ll u_{n}(k,x)=\tilde{O}(n^{\frac{1}{10}}),$ we have a precise asymptotic
	$$\aligned 
		\mathbb{P}(Y_{n-k}>a_{n}+b_{n}x)=\frac{1+O(u_n^{-2}(k, x))}{\sqrt{2\pi} u_n(k, x)}e^{-\frac{u_n^2(k, x)}{2}}. 
	\endaligned $$
\end{lem}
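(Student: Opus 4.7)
The plan is to reduce the Gamma tail probability to a Gaussian tail via the incomplete gamma integral combined with Stirling and a Laplace-type expansion. Setting $m := n - k$ and $t := a_n + b_n x$, the starting point is
$$\mathbb{P}(Y_{m} > t) = \frac{1}{\Gamma(m)}\int_t^\infty y^{m-1} e^{-y}\,dy = \frac{t^m e^{-t}}{\Gamma(m)}\int_0^\infty (1+s)^{m-1} e^{-ts}\,ds$$
after the change of variable $y = t(1+s)$. I would analyse the prefactor $t^m e^{-t}/\Gamma(m)$ by Stirling and the residual integral by Laplace's method around $s = 0$, and then glue the two pieces to recover the Gaussian tail $\tfrac{1}{\sqrt{2\pi}\, u_n}e^{-u_n^2/2}$.

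For the prefactor, Stirling $\Gamma(m) = \sqrt{2\pi}\, m^{m-1/2} e^{-m}(1 + O(1/n))$ gives
$$\frac{t^m e^{-t}}{\Gamma(m)} = \frac{\sqrt{m}}{\sqrt{2\pi}}\exp\!\bigl[\psi_n(t)\bigr]\bigl(1 + O(1/n)\bigr), \qquad \psi_n(t) := m\log(t/m) - (t - m).$$
With $\delta := (t - m)/m$ and the identity $m\delta = k + \sqrt{n\gamma_n} + b_n x = \sqrt{n}\, u_n(k,x) + O(k)$, the series $\psi_n(t) = -m\delta^2/2 + m\delta^3/3 - \cdots$ yields $\psi_n(t) = -u_n^2(k,x)/2 + O(u_n^3/\sqrt{n})$ whenever $u_n^3 \ll \sqrt{n}$. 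For the residual integral I would substitute $s = v/\sqrt{m}$, expand the exponent to read
$$\int_0^\infty (1+s)^{m-1} e^{-ts}\,ds = \frac{1}{\sqrt{m}}\int_0^\infty e^{-\tilde u_n v - v^2/2}\bigl(1 + O(v^3/\sqrt{m})\bigr)\,dv$$
with $\tilde u_n = u_n(k,x)(1 + o(1))$, and then apply the identity $\int_0^\infty e^{-uv - v^2/2}\,dv = \sqrt{2\pi}\, e^{u^2/2}(1 - \Phi(u))$ together with the Mills-ratio asymptotic $1 - \Phi(u) = \tfrac{e^{-u^2/2}}{\sqrt{2\pi}\,u}(1 - u^{-2} + O(u^{-4}))$ and the tight one-sided bound $1 - \Phi(u) \le \tfrac{e^{-u^2/2}}{\sqrt{2\pi}\,u}$.

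Gluing the two factors produces
$$\mathbb{P}(Y_m > t) = \frac{1 + O(u_n^{-2}) + O(u_n^3/\sqrt{n})}{\sqrt{2\pi}\, u_n(k,x)}\, e^{-u_n^2(k,x)/2},$$
which in the range $u_n = \tilde O(n^{1/10})$ collapses to the precise asymptotic since $u_n^3/\sqrt{n} = o(u_n^{-2})$ there. For the coarser upper bound, the looser prefactor $1/u_n$ versus $1/(\sqrt{2\pi}\,u_n)$ absorbs the multiplicative correction and the remaining additive error is of order $u_n^2 e^{-u_n^2/2}/\sqrt{n} = O(n^{-1/2} u_n^{-3})$ because $u^5 e^{-u^2/2}$ is bounded on $(0,\infty)$. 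In the complementary large-deviation regime $u_n^3 \gg \sqrt{n}$, the Chernoff inequality $\mathbb{P}(Y_m > t) \le e^{-m h(\delta)}$ with $h(\delta) = \delta - \log(1+\delta)$ gives a super-exponentially small upper bound that is dwarfed by the right-hand side, so the inequality is automatic. The main obstacle will be tracking the error terms uniformly in $k$ and $x$: in particular, the Taylor remainders in $\psi_n$ must all be dominated by $u_n^3/\sqrt{n}$ using only the hypothesis $k \ll n$ (so that $m \asymp n$), and the cut-off $u_n = \tilde O(n^{1/10})$ is dictated precisely by the requirement that the multiplicative error $O(u_n^3/\sqrt{n})$ stay within the $O(u_n^{-2})$ budget coming from the Mills ratio.
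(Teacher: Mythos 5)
Your proposal is correct in substance but takes a genuinely different route from the paper. The paper observes that $Y_{n-k}$ is a sum of $n-k$ i.i.d.\ $\mathrm{Exp}(1)$ variables and then quotes two off-the-shelf results: a Cram\'er-type moderate-deviation expansion (Theorem 1, p.\ 293 of Petrov) gives
$\mathbb{P}(Y_{n-k}>a_n+b_nx)=(1-\Phi(z))\bigl(1+O(n^{-1/2}z^3)\bigr)$
for the standardized threshold $z=(a_n+b_nx-(n-k))/\sqrt{n-k}$ when $z\ll n^{1/6}$, which together with the Mills ratio yields the precise asymptotic; and the non-uniform Berry--Esseen inequality ((8.1) of Chen--Goldstein--Shao) gives
$\mathbb{P}(Y_{n-k}>a_n+b_nx)=1-\Phi(z)+O(n^{-1/2}(1+z)^{-3})$
uniformly in $z$, which supplies the one-sided bound with the additive $O(n^{-1/2}u_n^{-3})$ error. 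You instead write the tail as an incomplete Gamma integral, apply Stirling to $\Gamma(m)$ and Laplace's method to the residual integral, and rebuild the Gaussian-tail asymptotic by hand; this is the classical direct route and it reproduces exactly the same error budget $O(u_n^{-2})+O(u_n^3/\sqrt{n})$. The trade-off is that your version is self-contained but forces you to track the Laplace and Stirling remainders explicitly, whereas the paper outsources that bookkeeping to cited theorems. One point you should not gloss over: in the regime $u_n^3\gtrsim\sqrt n$ your Chernoff bound $e^{-mh(\delta)}$ with $h(\delta)\ge\frac{\delta^2}{2(1+\delta)}$ is \emph{not} dominated by the Gaussian leading term $u_n^{-1}e^{-u_n^2/2}$ (the cubic correction in $h$ actually pushes it above), so the inequality of the lemma is rescued only by the additive $O(n^{-1/2}u_n^{-3})$ term. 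The check that $e^{-mh(\delta)}\le Cn^{-1/2}u_n^{-3}$ does hold there, since $u_n^2\gg n^{1/3}\gg\log n$ makes $mh(\delta)\gtrsim\min(u_n^2,\sqrt n\,u_n)$ super-logarithmic, but it is exactly the step where the paper's non-uniform Berry--Esseen bound is cleaner because it hands you the $(1+z)^{-3}$ decay in one stroke.
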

\begin{proof} The density function of $Y_j$ is $\frac{1}{(j-1)!}y^{j-1} e^{-y}, y>0.$ Equivalently, $Y_j$ is a Gamma distribution and it could be regarded in distribution as the sum of $j$ identically independent random variables obeying exponential distribution with parameter $1.$ Thus, Theorem 1 on page 293 from \cite{Petrov}  entails that
$$\aligned \mathbb{P}(Y_{n-k}>a_{n}+b_{n}x)&=\mathbb{P}\left(\frac{Y_{n-k}-(n-k)}{\sqrt{n-k}}>\frac{a_{n}+b_{n}x-(n-k)}{\sqrt{n-k}}\right)\\
&=(1-\Phi(\frac{a_{n}+b_{n}x-(n-k)}{\sqrt{n-k}}))(1+O(n^{-1/2}u_n^{3}(k, x)))  \endaligned $$	
once $|\frac{a_{n}+b_{n}x-(n-k)}{\sqrt{n-k}}|\ll n^{1/6},$
where $ \Phi $ is the distribution function of a standard normal. We simplify first $\frac{a_{n}+b_{n}x-(n-k)}{\sqrt{n-k}}.$ Indeed, with the condition $ 0\leq k\ll n $, we have 
	\begin{equation}\aligned \label{an}
		\frac{a_{n}+b_{n}x-(n-k)}{\sqrt{n-k}}&=\frac{k+\sqrt{n\gamma_{n}}+\sqrt{\frac{n}{\gamma_{n}}}\,x}{\sqrt{n}}\sqrt{\frac{n}{n-k}}\\
		&=u_n(k, x)\left(1+O\left(kn^{-1}\right)\right). \endaligned 
	\end{equation}
	The condition $1\ll u_n(k, x)=\tilde{O}(n^{\frac{1}{10}})$ indicates $k=\tilde{O}(n^{3/5})$ and then $u_n(k, x) k n^{-1}=o(1).$ 
	Thus, the Mills ratio \begin{equation}\label{normal}1-\Phi(t)=\frac{1}{\sqrt{2\pi} t} e^{-t^2/2}(1+O(t^{-2}))\end{equation}
for $t>0$ large enough  helps us to get in further that 
$$\aligned &\quad 
\mathbb{P}(Y_{n-k}> a_{n}+b_{n}x)\\
&=\frac{1+O(n^{-1/2}u_n^3(k, x)+kn^{-1}+u_n^{-2}(k, x))}{\sqrt{2\pi} u_n(k, x)}e^{-\frac{u_n^2(k, x)}{2} (1+O(k n^{-1}))}\\
&=\frac{e^{-\frac{u_n^2(k, x)}{2}}}{\sqrt{2\pi} u_n(k, x)}(1+O(n^{-1/2}u_n^3(k, x)+u_n^{-2}(k, x)+u_n^2(k, x)kn^{-1})). 
\endaligned $$ 
Since $u_n(k, x)=\tilde{O}(n^{\frac{1}{10}}),$ we see clearly 
$$O(n^{-1/2}u_n^3(k, x)+u_n^{-2}(k, x)+u_n^2(k, x)kn^{-1})=O(u_n^{-2}(k, x)).$$
Now, we work for the upper bound. We see from \eqref{an} that 
$$\frac{a_{n}+b_{n}x-(n-k)}{\sqrt{n-k}}\ge u_n(k, x),$$
whence with the help of the non-uniform Berry-Esseen bounds ((8.1) in \cite{Chen 2011}) and the Mills ratio again, we have  
$$	\aligned
	\mathbb{P}(Y_{n-k}> a_{n}+b_{n}x)&\le \mathbb{P}\left(\frac{Y_{n-k}-(n-k)}{\sqrt{n-k}}\ge u_n(k, x)\right)\nonumber\\
	&=1-\Phi(u_n(k, x))+O(n^{-1/2}u_n^{-3}(k, x)), \nonumber\\
	&=\frac{1+o(1)}{\sqrt{2\pi}u_n(k, x)} e^{-\frac{u_n^2(k, x)}{2}}+O(n^{-1/2}u_n^{-3}(k, x))\nonumber\\
	&\le \frac{1}{u_n(k, x)}e^{-\frac{u_n^2(k, x)}{2}}+O(n^{-1/2}u_n^{-3}(k, x))\endaligned $$
uniformly for $0\leq k\ll n $ and  $ x $ such that $1\ll  u_{n}(k,x)$ as $n$ large enough. 
\end{proof}

Next, we summarize some formulas relating to $ u_{n}(k,x) $ in one lemma.
\begin{lem}\label{3.1}
Given $0\le k\le j_n$ with $\sqrt{n\gamma_n}\ll j_n\ll n.$ Let $u_n(k, x)$ and $\gamma_n$ be defined as above. Given any $ x_{n}$ such that
$ 1\ll u_{n}(k, x_{n}) $ for all $0\le k\le j_n.$
	
\item[(1).] For any $m>1,$ it follows 
$$\int_{x_n}^{\infty}u^{-m}_n(k, x)dx=\frac{\sqrt{\gamma_{n}}}{m-1} u^{-m+1}_n(k, x_n).$$ 	
\item[(2).] For any $c, m>0,$ we have 
$$\int_{x_n}^{\infty} u^{-m}_n(k, x)e^{-c \,u^2_n(k, x)}dx=\frac{\sqrt{\gamma_{n}} e^{-c u_n^2(k, x_n)}}{2cu_n^{m+1}(k, x_n)} (1-\frac{m+1}{2c} u_n^{-2}(k, x_n)+ O(u_n^{-4}(k, x_n))).$$	
\item[(3).] Given $ L\geq 0, c, m>0 $ and  suppose $1\ll u_{n}(L, x_{n})\ll \sqrt{n}.  $ Then
$$\aligned 
	&\quad \sum\limits_{k=L}^{+\infty}u_{n}^{-m}(k,x_{n})e^{-c u_{n}^{2}(k,x_{n})}\\&=\frac{\sqrt{n}e^{-c u_{n}^{2}(L, x_{n})}}{2 c u_{n}^{m+1}(L, x_{n})}(1-\frac{m+1}{2c} u_n^{-2}(L, x_n)+ O(u_n^{-4}(L, x_n)+u_n(L, x_n)n^{-1/2})).\endaligned 
$$	
	\item[(4).] Given $m>1$ and $ L\geq 0.$  Assuming $1\ll u_{n}(L, x_{n}),$  one has  
	$$\sum_{k=L}^{+\infty} u_n^{-m}(k, x_n)=\frac{\sqrt{n}}{m-1}u_n^{-m+1}(L, x_n)(1+O(u_n^{-1}(L, x_n) n^{-1/2})).$$
\end{lem}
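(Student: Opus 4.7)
The plan is to handle the four parts as a sequence of elementary Laplace-type estimates on the half-line, exploiting the fact that $u_n(k,x)$ is affine in both variables: varying $x$ changes $u_n$ with slope $1/\sqrt{\gamma_n}$, while varying $k$ changes it with slope $1/\sqrt{n}$. This observation reduces (1) and (2) to one-dimensional integrals, and reduces (3) and (4) to Riemann sums of the same integrands with step $1/\sqrt{n}$, which I would control by Euler--Maclaurin.

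For (1), I would substitute $u=u_n(k,x)$, giving $dx=\sqrt{\gamma_n}\,du$, so the integral becomes $\sqrt{\gamma_n}\int_U^\infty u^{-m}\,du$ with $U:=u_n(k,x_n)$; the claim is then immediate since $m>1$. Part (2) uses the same substitution and then iterated integration by parts via $u^{-m}e^{-cu^2}\,du=-(2c)^{-1}u^{-m-1}\,d(e^{-cu^2})$: a first IBP extracts the leading term $\frac{e^{-cU^2}}{2cU^{m+1}}$ plus a residual integral of the same form with $m$ replaced by $m+2$; a second IBP extracts the next-order coefficient $-\frac{m+1}{2c}U^{-2}$; and a third bounds the remainder by $O(U^{-4})$ times the leading factor. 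Multiplying by $\sqrt{\gamma_n}$ recovers the advertised asymptotic.

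For (3) and (4), the key point is that the partial sum $\sum_{k=L}^\infty g(u_n(k,x_n))$ samples $g$ on $[U,\infty)$ at points spaced by $1/\sqrt{n}$, where $U:=u_n(L,x_n)$. Euler--Maclaurin then gives
$$\sum_{k=L}^\infty g(u_n(k,x_n))=\sqrt{n}\int_U^\infty g(u)\,du+\tfrac{1}{2}g(U)+(\text{higher-order corrections}).$$
For (3) I would apply this with $g(u)=u^{-m}e^{-cu^2}$, plug in the integral from (2) to produce the leading factor $\frac{\sqrt{n}\,e^{-cU^2}}{2cU^{m+1}}$ together with its $U^{-2}$ correction, and note that $\tfrac{1}{2}g(U)$ is smaller than the leading term by a factor of order $cU/\sqrt{n}$, which is exactly the stated $O(u_n(L,x_n)n^{-1/2})$ relative error. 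For (4) I would apply Euler--Maclaurin with $g(u)=u^{-m}$, use the integral from (1) to produce $\frac{\sqrt{n}}{m-1}U^{-m+1}$, and note that $\tfrac{1}{2}g(U)$ is smaller by a factor of order $1/(U\sqrt{n})$, matching the stated $O(u_n^{-1}(L,x_n)n^{-1/2})$.

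The main technical point I anticipate is not the individual computations, which amount to one substitution and a few integrations by parts, but rather the uniform control of the Euler--Maclaurin remainder beyond the $\tfrac{1}{2}g(U)$ term. For (3), one uses the hypothesis $u_n(L,x_n)\ll\sqrt{n}$ together with the rapid Gaussian decay of $g$ and its derivatives to show that the $g'(U)/n$ and higher corrections are absorbed into the error; for (4), the condition $m>1$ ensures summability and that further derivatives contribute only subleading terms. Once this uniformity is established, the four statements follow by straightforward bookkeeping.
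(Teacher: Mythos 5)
Your proof is correct, and parts (1) and (2) are essentially the paper's argument (same substitution $u=u_n(k,x)$, then integration by parts; you organize the IBP directly in the variable $u$ whereas the paper first passes to $y=t^2$, but these are the same computation). For parts (3) and (4), however, you reach for Euler--Maclaurin where the paper uses a strictly simpler device: since $u\mapsto u^{-m}e^{-cu^2}$ and $u\mapsto u^{-m}$ are both monotone decreasing on $(0,\infty)$, the sum $\sum_{k\ge L} g(u_n(k,x_n))$ is sandwiched between $\int_L^\infty g(u_n(t,x_n))\,dt$ and that integral plus the single boundary term $g(u_n(L,x_n))$, i.e.\ the elementary integral test. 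After the change of variables $y=u_n(t,x_n)$, $dy=dt/\sqrt n$, this yields the claimed $O(u_n(L,x_n)n^{-1/2})$ relative error directly, with no need to control higher Bernoulli-number corrections. Your Euler--Maclaurin route arrives at the same boundary term $\tfrac12 g(U)$ as the dominant correction and the same final error, and it would in principle give more, but it obliges you to verify (as you correctly flag) that the $g'(U)/n$ and subsequent corrections are absorbed — extra work the monotonicity argument avoids entirely. Both routes are sound; the paper's is the more economical one given that only the leading error is needed.
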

\begin{proof}
	Using the substitution $t=u_n(k, x),$ which satisfies $dt =\frac{dx}{\sqrt{\gamma_{n}}},$ we see 
	$$\int_{x_n}^{\infty}u^{-m}_n(k, x)dx=\sqrt{\gamma_{n}}\int_{u_n(k, x_n)}^{\infty}t^{-m} dt=\frac{\sqrt{\gamma_{n}}}{m-1} u^{-m+1}_n(k, x_n).$$	
	Similarly, it holds 
	$$\aligned \int_{x_n}^{\infty} u^{-m}_n(k, x) e^{-c \,u^2_n(k, x)}dx&=\sqrt{\gamma_{n}} \int_{u_n(k, x_n)}^{\infty} t^{-m}e^{-c \,t^2} dt.
	\endaligned $$
	 For the integral $\int_{z}^{\infty} t^{-m} e^{-ct^2}dt$ with $z$ large enough, 
	we have via the substitution $y=t^2$ and twice integral formula by parts that 
	\begin{equation}\label{oneasym}\aligned \int_{z}^{\infty} t^{-m} e^{-ct^2}dt&=\frac{1}{2}\int_{z^2}^{\infty} y^{-\frac{m+1}{2}}e^{-cy} dy\\
	&=\frac{1}{2c} e^{-cz^2} z^{-(m+1)}\big(1-\frac{(m+1)}{2c}z^{-2}+O(z^{-4})\big).
	\endaligned  
	\end{equation} 
This expression for the integral immediately tells 
$$\aligned \int_{x_n}^{\infty} u_n^{-m}(k, x) e^{-c \,u^2_n(k, x)}dx&=\frac{\sqrt{\gamma_{n}} e^{-c u_n^2(k, x_n)}}{2cu_n^{m+1}(k, x_n)} (1-\frac{m+1}{2c} u_n^{-2}(k, x_n)+ O(u_n^{-4}(k, x_n))).
\endaligned $$
For the third term, we extend the definition of	 $u_n(k, x)$ to 
$u_n(\cdot, x)$ on $\mathbb{R}_+$ as 
$$u_n(t, x)=\frac{t}{\sqrt{n}}+\sqrt{\gamma_{n}}+\frac{x}{\sqrt{\gamma_{n}}} .$$ 
	Since $ u_{n}^{-m}(k,x_{n})e^{-cu_{n}^{2}(k,x_{n})} $ is decreasing on $ k $, it is ready to see $\sum_{k=L}^{+\infty}u_{n}^{-m}(k,x_{n})e^{-cu_{n}^{2}(k,x_{n})}$ lies in the interval 
\begin{align}
	\left(\int_{L}^{+\infty}u_{n}^{-m}(t,x_{n})e^{-cu_{n}^{2}(t,x_{n})}dt,  \;
	\int_{L}^{+\infty}u_{n}^{-m}(t, x_{n})e^{-cu_{n}^{2}(t, x_{n})}dt+u_{n}^{-m}(L,x_{n})e^{-cu_{n}^{2}(L,x_{n})}\right). \label{interval}
\end{align}
	The substitution $y=u_n(t, x_n),$ which verifies $d y=\frac{dt}{\sqrt{n}},$ and \eqref{oneasym} imply that 
\begin{align*}
	\int_{L}^{+\infty}u_{n}^{-m}(t,x_{n})e^{-cu_{n}^{2}(t,x_{n})}dt&=\sqrt{n}\int_{u_{n}(L, x_{n})}^{+\infty}y^{-m} e^{-c y^{2}}dy\\
	&=\frac{\sqrt{n}e^{-c u_{n}^{2}(L, x_{n})}}{2 c u_{n}^{m+1}(L, x_{n})}(1-\frac{m+1}{2c} u_n^{-2}(L, x_n)+ O(u_n^{-4}(L, x_n))).\nonumber
\end{align*}
Under the condition $u_{n}(L, x_{n})=o(\sqrt{n}), $ we see 
$$\frac{u_{n}^{-m}(L,x_{n})e^{-cu_{n}^{2}(L,x_{n})}}{\int_{L}^{+\infty}u_{n}^{-m}(t, x_{n})e^{-cu_{n}^{2}(t, x_{n})}dt}=O\left(\frac{u_{n}(L, x_{n})}{\sqrt{n}}\right)=o(1).$$ 
Therefore, the interval \eqref{interval} ensures that 
$$\aligned 	& \quad \sum_{k=L}^{+\infty}u_{n}^{-m}(k, x_{n})e^{-cu_{n}^{2}(k, x_{n})}\\
&=\frac{\sqrt{n}e^{-c u_{n}^{2}(L, x_{n})}}{2 c u_{n}^{m+1}(L, x_{n})}\left(1-\frac{m+1}{2c} u_n^{-2}(L, x_n)+O(u_n^{-4}(L, x_n)+u_{n}(L, x_{n})n^{-1/2})\right). \endaligned$$
Thus, we finish the third item. The fourth item follows by the same method as that
for the third one.
\end{proof}

\section{Proof of Theorem \ref{TH}}
This section is devoted to the proof of Theorem \ref{TH}. 

Recall $$X_n:=\sqrt{4 \gamma_{n}}\big(R_{n}-\sqrt{n}-\frac12\sqrt{\gamma_{n}}\big)$$ 
and 
$$W_n:=\frac{\sqrt{\gamma_{n}}\big(R_{n}^2-(n+\sqrt{n\gamma_n})\big)}{\sqrt{n}}.$$
We first give a sketch of the proof. 

The first thing to do is to transfer 
$W_1(F_n, \Lambda)$ to $W_1(\mathcal{L}(W_n), \Lambda).$
In fact, the triangle inequality of $W_1$ says
$$W_1(\mathcal{L}(W_n), \Lambda)-W_1(\mathcal{L}(W_n), F_n)\le W_1(F_n, \Lambda)\le W_1(\mathcal{L}(W_n), \Lambda)+W_1(\mathcal{L}(W_n), F_n).$$ 
To prove Theorem \ref{TH}, we will prove that 
\begin{equation}\label{total0} W_1(\mathcal{L}(W_n), \Lambda)=2(1+o(1))\gamma_n^{-1} \log(\sqrt{2\pi}\log n) \end{equation}
 and 
 \begin{equation}\label{total1} W_1(\mathcal{L}(W_n), F_n)\ll \gamma_n^{-1}.\end{equation}
Now the closed form \eqref{W1} of $W_1$ says 
$$W_{1}(\mathcal{L}(W_n),\Lambda)=\int_{-\infty}^{+\infty}|\mathbb{P}(W_n\le x)-e^{-e^{-x}}|dx=\int_{-\infty}^{+\infty}|\mathbb{P}(Y_{(n)}\le a_n+b_n x)-e^{-e^{-x}}|dx,$$ whence 
\begin{align}
	W_{1}(\mathcal{L}(W_n),\Lambda)&=\left(\int_{-\infty}^{-\ell_1(n)}+\int_{-\ell_1(n)}^{\ell_2(n)}+\int_{\ell_2(n)}^{+\infty}\right)\left|\mathbb{P}\left(Y_{(n)}\leq a_{n}+b_{n}x\right)-e^{-e^{-x}}\right|dx\nonumber\\
	=&:\rm I+\rm {I\!I}+\rm {I\!I\!I},\nonumber
\end{align}
where $ \ell_1(n)$ and $ \ell_2(n) $ will be determined later with $ \ell_1(n)\to \infty $ and $ \ell_2(n)\to \infty $ as $ n\to \infty. $ For \eqref{total0}, it suffices to prove 
$$\aligned {\rm I\!I}&=2(1+o(1))\gamma_n^{-1}\log(\sqrt{2\pi}\log n)
\endaligned $$
and $${\rm I}\ll \gamma_n^{-1}\log\log n\quad\text{and}\quad {\rm I\!I\!I}\ll \gamma_n^{-1}\log\log n.$$  
 
Next, we are going to verify the estimates above one by one. 
Before that, we review the definition of $u_n(k, x)$ 
$$u_n(k, x)=\frac{k}{\sqrt{n}}+\sqrt{\gamma_n}+\frac{x}{\sqrt{\gamma_n}}$$ 
with $\gamma_n=\log n-2\log(\sqrt{2\pi}\log n).$
\subsection{Estimate on $\rm {I} $.}
Set $$ y_{0}=-\frac{a_{n}}{b_{n}}=-(\sqrt{n\gamma_{n}}+\gamma_{n}), $$ which is the unique solution to the equation $a_n+b_n x=0$ and then $\mathbb{P}(Y_{(n)}\le a_n+b_n x)=0$ for any $x<y_0.$ 
Therefore, 
$$ \aligned {\rm I}&=\int_{-\infty}^{-\ell_1(n)}\left|\mathbb{P}\left(Y_{(n)}\leq a_{n}+b_{n}x\right)-e^{-e^{-x}}\right|dx\\
&\leq\int_{y_{0}}^{-\ell_1(n)}\mathbb{P}\left(Y_{(n)}\leq a_{n}+b_{n}x\right)dx+\int_{-\infty}^{-\ell_1(n)}e^{-e^{-x}}dx. 
\endaligned $$
Using the substitution $y=e^{-x}$, we obtain 
\begin{equation}\label{C}
	\int_{-\infty}^{-\ell_1(n)}e^{-e^{-x}}dx=\int_{e^{\ell_1(n)}}^{+\infty}\frac{e^{-y}}{y}dy\leq \frac{1}{e^{\ell_1(n)}}\int_{e^{\ell_1(n)}}^{+\infty}e^{-y}dy=e^{-\ell_1(n)}e^{-e^{\ell_1(n)}} .
\end{equation}
After careful consideration, we choose $\ell_1(n)=\frac{1}{2}\log\log n,$ which satisfies $\ell_1(n)<\gamma_n.$  
Hence, 
\begin{align}\label{334}
	&\int_{y_{0}}^{-\ell_1(n)}\mathbb{P}\left(Y_{(n)}\leq a_{n}+b_{n}x\right)dx\\ \leq&\int_{y_{0}}^{-\gamma_{n}}\mathbb{P}\left(Y_{(n)}\leq a_{n}+b_{n}x\right)dx+\int_{-\gamma_{n}}^{-\ell_1(n)}\mathbb{P}\left(Y_{(n)}\leq a_{n}+b_{n}x\right)dx\nonumber\\ \leq& (-y_{0})\prod\limits_{k=0}^{m_{1}(n)}\mathbb{P}\left(Y_{n-k}\leq a_{n}-b_{n}\gamma_{n}\right)+\gamma_{n}\prod\limits_{k=0}^{m_{2}(n)}\mathbb{P}\left(Y_{n-k}\leq a_{n}-b_{n}\ell_1(n)\right)	
	\nonumber\\\leq&(-y_{0})\mathbb{P}^{m_{1}(n)}\left(Y_{n-m_{1}(n)}\leq a_{n}-b_{n}\gamma_{n}\right)+\gamma_{n}\exp\left\{-\sum\limits_{k=0}^{m_{2}(n)}\mathbb{P}\left(Y_{n-k}> a_{n}-b_{n}\ell_1(n)\right)\right\},\nonumber
\end{align}
where for the third and forth inequalities we use the property of $Y_{(n)}$ and the monotonicity of $\mathbb{P}(Y_k\le t)$ on $k$ to get $$\mathbb{P}(Y_{(n)}\le a_n+b_n x)=\prod_{k=1}^n\mathbb{P}(Y_{k}\le a_n+b_n x)\le \mathbb{P}^{m_1(n)}(Y_{n-m_1(n)}\le a_n-b_n \gamma_n)$$ for any $x\in (y_0, -\gamma_n)$ and the last inequality is also due to the inequality $ \log(1-t)\leq -t $
for $t\in (0, 1).$ 

On the one hand, choose $ m_{1}(n)=[\sqrt{n}] $ such that $u_{n}(m_{1}(n), -\gamma_{n})=1+O(n^{-1/2})$. The same argument as that for  Lemma \ref{ZX1}, we  know that
$$ \aligned \mathbb{P}\left(Y_{n-m_{1}(n)}> a_{n}-b_{n}\gamma_{n}\right)&=(1-\Phi(u_n(m_1(n), -\gamma_n)(1+O(m_1(n) n^{-1})))(1+o(1))\\
&=(1-\Phi(1+o(1)))(1+o(1))\geq \frac{1}{3},
\endaligned $$
which implies
$$(-y_{0})\mathbb{P}^{m_{1}(n)}\left(Y_{n-m_{1}(n)}\leq a_{n}-b_{n}\gamma_{n}\right)\leq (\sqrt{n\gamma_{n}}+\gamma_{n})\left(\frac{2}{3}\right)^{m_1(n)}\ll \gamma_n^{-1}  $$
On the other hand, 
\begin{align}
	u_{n}(0, -\ell_1(n))&=\sqrt{\gamma_{n}}\left(1+o(1)\right)\nonumber;\\
		u^{2}_{n}(0, -\ell_1(n))&=\gamma_{n}-2\ell_1(n)+o(1)\nonumber
\end{align}
and choose $m_{2}(n)$ such that$$u_{n}(m_{2}(n), -\ell_1(n))=2\sqrt{\log n},$$ whence Lemmas \ref{ZX1} and \ref{3.1} guarantee 
\begin{align}
	&\sum\limits_{k=0}^{m_{2}(n)}\mathbb{P}\left(Y_{n-k}> a_{n}-b_{n}\ell_1(n)\right)\nonumber\\
	=&\sum\limits_{k=0}^{m_{2}(n)}\frac{(1+o(1))}{\sqrt{2\pi}u_{n}(k, -\ell_1(n))}e^{-\frac{u_{n}^{2}(k,-\ell_1(n))}{2}}\nonumber\\
	=&\frac{\sqrt{n}(1+o(1))}{\sqrt{2\pi}u_{n}^{2}(0, -\ell_1(n))}e^{-\frac{u_{n}^{2}(0,-\ell_1(n))}{2}}-\frac{\sqrt{n}(1+o(1))}{\sqrt{2\pi}u_{n}^{2}(m_{2}(n), -\ell_1(n))}e^{-\frac{u_{n}^{2}(m_{2}(n),-\ell_1(n))}{2}}\nonumber\\
	=&\sqrt{\log n}\left(1+O\left(\frac{(\log\log n)^{2}}{\log n}\right)\right)+O\left(n^{-\frac{3}{2}}(\log n)^{-1}\right)\label{ss1}\\
	=&\sqrt{\log n}+o(1).\nonumber
\end{align}
Here, for the last but second equality we use the precise asymptotic 
$$e^{-\frac12 u_n^2(0, -\ell_1(n))}=e^{-\frac{\gamma_n}{2}+\ell_1(n)}(1+o(1))=O(n^{-1/2}(\log n)^{3/2})$$ and 
$$e^{-\frac12u_{n}^{2}(m_{2}(n),-\ell_1(n))}=e^{-2\log n}=n^{-2}.$$
The expression \eqref{ss1} implies that 
\begin{align}
	\gamma_{n}\exp\{-\sum\limits_{k=0}^{m_{2}(n)}\mathbb{P}\left(Y_{n-k}> a_{n}-b_{n}\ell_1(n)\right)\}&=\exp\left\{-\sqrt{\log n}+\log \gamma_{n}+o(1)\right\}\ll\gamma_n^{-1}\nonumber.
\end{align}
Combining all these estimates together, we conclude that
$$ \rm I\ll \gamma_n^{-1}. $$
 
\subsection{Estimate   on $\rm I\!I $.}
Setting $\beta_n(x)=-\sum_{k=1}^{n}\log \mathbb{P}(Y_{k}\le a_n+b_n x),$ it holds 
$${\rm I\!I}=\int_{-\ell_1(n)}^{\ell_2(n)}|e^{-\beta_n(x)}-e^{-e^{-x}}|dx.$$ Choosing $\ell_2(n)=\log(\sqrt{2\pi}\log n)$ with particular purpose one can see later, 	
we first analyze the asymptotic of $\beta_n(x)$ for $x\in (-\ell_1(n), \ell_2(n)).$ By definition and the fact that $-\log \mathbb{P}(Y_k\le a_n+b_n x)$ is increasing on $k,$ with some $1\ll j_n\ll n,$ we have 
\begin{equation}\label{betanxlu}\aligned \beta_n(x)&\le -\sum_{k=0}^{j_n-1} \log \mathbb{P}(Y_{n-k}\le a_n+b_n x)-(n-j_n)\log \mathbb{P}(Y_{n-j_n}\le a_n+b_n x);\\
\beta_n(x)&\ge -\sum_{k=0}^{j_n-1} \log \mathbb{P}(Y_{n-k}\le a_n+b_n x). \endaligned \end{equation}
Now $1\ll u_n(k, x)$ and then Lemma \ref{ZX1} ensures $\mathbb{P}(Y_{n-k}\ge a_n+b_n x)=o(1)$ 
uniformly on $0\le k\le j_n$ and $x\ge -\ell_1(n).$ Choose $j_n=[n^{3/5}]$  to make sure $u_n(j_n, x)=n^{\frac{1}{10}}+O(\sqrt{\gamma_n})$  such that    
\begin{equation}\label{betanx2}\aligned -n\log \mathbb{P}(Y_{n-j_n}\le a_n+b_n x)&=n\mathbb{P}(Y_{n-j_n}\ge a_n+b_n x)(1+o(1))\\
&=\frac{n(1+o(1))}{\sqrt{2\pi}u_{n}(j_n, x)}e^{-\frac{u_n^2(j_n, x)}{2}}\\
&=o(n^{\frac{9}{10}} e^{-\frac13 n^{\frac15}}).\\
 \endaligned 
 \end{equation}
Meanwhile, since $u_n(k, x)=\frac{k}{\sqrt{n}}+\sqrt{\gamma_n}+o(1)$ uniformly on $x\in (-\ell_1(n), \ell_2(n)),$ it follows
$$\aligned u_n^{-2}(k, x)=\tilde{O}(\gamma_n^{-1})
\endaligned$$
for $0\le k\le j_n.$ Thus, 
 Lemma \ref{ZX1} says
$$\mathbb{P}(Y_{n-k}\ge a_n+b_n x)=\frac{1+\tilde{O}(\gamma_{n}^{-1})}{\sqrt{2\pi}u_n(k, x)} e^{-\frac{u_n^2(k, x)}{2}}.$$ 
Therefore,  on the one hand 
$$\mathbb{P}(Y_{n-k}\ge a_n+b_n x)\le \mathbb{P}(Y_n\ge a_n+b_n x) =O(u_n^{-1}(0, x)e^{-\frac{u_n^2(0, x)}{2}})$$ 
and on the other hand Lemma \ref{3.1} indicates  
$$\aligned &\quad \sum_{k=0}^{j_n-1} \mathbb{P}(Y_{n-k}\ge a_n+b_n x)=\sum_{k=0}^{j_n-1}\frac{1+\tilde{O}(\gamma_{n}^{-1})}{\sqrt{2\pi} u_n(k, x)} e^{-\frac{u_n^2(k, x)}{2}}=\frac{(1+O(\gamma_n^{-1}))\sqrt{n}e^{-\frac{u_n^2(0, x)}{2}}}{\sqrt{2\pi} u_n^2(0, x)},
\endaligned$$
where for the last equality we use Lemma \ref{3.1} and the fact 
$$\frac{1}{u_n^2(j_n, x)} e^{-\frac{u_n^2(j_n, x)}{2}}\ll \frac{1}{\gamma_nu_n^2(0, x)} e^{-\frac{u_n^2(0, x)}{2}}.$$

Now $u_n(0, x)=\sqrt{\gamma_n}+\frac{x}{\sqrt{\gamma_n}}$ and then 
$$e^{-\frac{u_n^{2}(0, x)}{2}}=e^{-\frac{1}{2}\gamma_n-x-\frac{x^2}{2\gamma_n}}=\frac{\sqrt{2\pi}\log n}{\sqrt{n}}e^{-x-\frac{x^2}{2\gamma_n}},$$

which implies $\mathbb{P}(Y_n\ge a_n+b_n x)\ll\gamma_n^{-2}$ and 

$$
\sum_{k=0}^{j_n-1} \mathbb{P}(Y_{n-k}\ge a_n+b_n x)=\frac{(1+O(\gamma_n^{-1}))\log n}{\gamma_n(1+\frac{x}{\gamma_n})^2}e^{-x-\frac{x^2}{2\gamma_n}}.	
$$
Thereby,
\begin{equation}\label{logjnpart}\aligned
-\sum_{k=0}^{j_n-1}\log\mathbb{P}(Y_{n-k}\le a_n+b_n x)&=\sum_{k=0}^{j_n-1} \mathbb{P}(Y_{n-k}\ge a_n+b_n x)(1+O(\mathbb{P}(Y_{n-k}\ge a_n+b_n x)))	\\
&=\frac{(1+O(\gamma_n^{-1}))\log n}{\gamma_n(1+\frac{x}{\gamma_n})^2}e^{-x-\frac{x^2}{2\gamma_n}}.
\endaligned
\end{equation}
Putting \eqref{betanx2} and \eqref{logjnpart} back into \eqref{betanxlu}, 
we see 
\begin{equation}\label{keybetanx}
\beta_n(x)=\frac{(1+O(\gamma_n^{-1}))\log n}{\gamma_n(1+\frac{x}{\gamma_n})^2}e^{-x-\frac{x^2}{2\gamma_n}}.	
\end{equation}
and then 
$$
\aligned e^{-x}-\beta_n(x)&=e^{-x}\big(1-e^{-\frac{x^2}{2\gamma_n}}\frac{(1+O(\gamma_n^{-1}))\log n}{\gamma_n(1+\frac{x}{\gamma_n})^2}\big)	\\
&=e^{-x}\gamma_n^{-1}(1+\gamma_n^{-1}x)^{-2}\left(\gamma_n(1+\gamma_n^{-1}x)^2-e^{-\frac{x^2}{2\gamma_n}}\log n(1+O(\gamma_n^{-1}))\right).
\endaligned 
$$
Since the condition $\ell_1(n)+\ell_2(n)\ll \sqrt{\gamma_n}$  guarantees $x^2\ll \gamma_n,$ it follows from some simple calculus together with $e^{-t}=1-t+O(t^2)$ for $|t|$ small enough that 
$$ \aligned 
&\quad \gamma_n(1+\gamma_n^{-1}x)^2-e^{-\frac{x^2}{2\gamma_n}}(1+O(\gamma_n^{-1}))\log n\\
&=\gamma_n+2x+\frac{x^2}{\gamma_n}-(1-\frac{x^2}{2\gamma_n}+O(x^4\gamma_n^{-2}))(1+O(\gamma_n^{-1}))\log n\\
&=\gamma_n-\log n+2x+\frac {x^2}2+O(1)\\
&=(-2\ell_2(n)+2x+\frac {x^2}2)(1+o(1)).
\endaligned $$
As a consequence, it holds 
\begin{equation}\label{betaxexpre}
\aligned e^{-x}-\beta_n(x)&=e^{-x}\gamma_n^{-1}(1+o(1))(2(x-\ell_2(n))+\frac{x^2}{2}).
\endaligned 
\end{equation} 
The choices of $\ell_1(n)=\frac{1}{2}\log\log n$ and $\ell_2(n)=\log(\sqrt{2\pi}\log n)$ claim
$$e^{-x}\gamma_n^{-1}\big|2(x-\ell_2(n))+\frac{x^2}{2}\big|=\tilde{O} (e^{\ell_1(n)} \gamma_n^{-1}(\log\log n)^2)=o(1),$$
which is exactly the reason why both $\ell_1(n)$ and $\ell_2(n)$ are chosen to be of order $\log\log n.$

The expression \eqref{betaxexpre} in further entails 
\begin{equation}\label{IIfinal}\aligned {\rm I\!I}
&=\int_{-\ell_1(n)}^{\ell_2(n)}e^{-e^{-x}}|e^{e^{-x}-\beta_n(x)}-1|dx
\\
&=(1+o(1))\gamma_n^{-1}\int_{-\ell_1(n)}^{\ell_2(n)}e^{-e^{-x}}e^{-x}\big|2(x-\ell_2(n))+\frac{x^2}{2}\big|dx.  \quad
\endaligned  \end{equation}
Now  
$$\aligned \int_{-\infty}^{+\infty}e^{-e^{-x}}e^{-x} dx &=\int_0^{+\infty} e^{-t} dt=1; \\
\int_{-\infty}^{+\infty}e^{-e^{-x}}e^{-x} x^2dx&=\int_0^{+\infty} e^{-t}(\log t)^2 dt=\gamma^2+\frac{\pi^2}{6},\\
\endaligned $$
where $\gamma$ is the Euler constant and the last integral can be found in \cite{GRJ} and $$\int_{-\infty}^{+\infty}e^{-e^{-x}}e^{-x} |x| dx=\int_0^{+\infty} e^{-t}|\log t|\, dt$$ converges,  
one gets since $\ell_1(n), \ell_2(n)\to\infty$ that 
\begin{equation}\label{IIfinal2}{\rm I\!I}=(1+o(1))\gamma_n^{-1}\left(2\ell_2(n)+O(1)\right)=2(1+o(1))\gamma_n^{-1}\ell_2(n)=\frac{2(1+o(1))\log\log n}{\log n}.\end{equation}

\subsection{Estimate on {\rm I\!I\!I}} Recall $$\beta_n(x)=-\sum_{j=1}^n\log \mathbb{P}(Y_j\le a_n+b_n x)$$ and then 
$${\rm I\!I\!I}=\int_{\ell_2(n)}^{+\infty} |e^{-\beta_n(x)}-e^{-e^{-x}}| dx,$$
whence it follows from the elementary inequality $1-e^{-x}\le x$ that 
$$\aligned{\rm I\!I\!I}&\le \int_{\ell_2(n)}^{+\infty} (e^{-x}+\beta_n(x))dx=\frac{1}{\sqrt{2\pi}\log n}+\int_{\ell_2(n)}^{+\infty} \beta_n(x)dx. 
 \endaligned $$ 
 As mentioned above, $\mathbb{P}(Y_{n-k}\ge a_n+b_n x)=o(1)$ for $x\ge \ell_2(n)$ and $0\le k\le j_n.$ Thus, to prove 
 $$\int_{\ell_2(n)}^{+\infty} \beta_n(x)dx\ll\frac{\log\log n}{\gamma_n},$$ which implies immediately $${\rm I\!I\!I}\ll \frac{\log\log n}{\gamma_n},$$ it suffices to prove 
  \begin{equation}\label{iii} J_n:=\int_{\ell_2(n)}^{+\infty} \sum_{j=1}^{n}\mathbb{P}(Y_{j}\ge a_n+b_n x)dx\ll\frac{\log\log n}{\gamma_n}. 
 \end{equation}
 
 Setting
$t_{n}=[ n^{\frac56}], $ and $j_{n}=[ n^{\frac{3}{5}}]$ and $p_n=n^{\frac{1}{10}}\sqrt{\gamma_n},$
similarly using the monotonicity
of $\mathbb{P}(Y_{j} \geq a_n+b_n x)$ on $j,$ we see
\begin{equation}\label{kkkkk}
	J_n\leq n\int_{\ell_2(n)}^{+\infty}\mathbb{P}(Y_{n-t_{n}}>a_{n}+b_{n}x)dx +\int_{\ell_2(n)}^{+\infty}\sum\limits_{k=0}^{t_{n}}\mathbb{P}(Y_{n-k}>a_{n}+b_{n}x)dx.
\end{equation}

Then, it follows from Lemmas  \ref{ZX1} and \ref{3.1}, and the fact  $ u_{n}(t_{n},\ell_2(n))=n^{1/3}(1+o(1)) $ that 
\begin{equation}\label{31&}\aligned
	n\int_{\ell_2(n)}^{+\infty}\mathbb{P}(Y_{n-t_{n}}>a_{n}+b_{n}x)dx\leq& \int_{\ell_2(n)}^{+\infty}\frac{n}{u_{n}(t_{n},x)}e^{-\frac{u_{n}^{2}(t_{n},x)}{2}} +O(n^{\frac12}u_{n}^{-3}(t_{n},x))dx\\
=&O\left(\frac{n\sqrt{\gamma_n}}{u_{n}^{2}(t_{n},\ell_2(n))}e^{-\frac{u_{n}^{2}(t_{n},\ell_2(n))}{2}}+\frac{\sqrt{n\gamma_{n}}}{u_{n}^{2}(t_{n},\ell_2(n))}\right)	\\
=	&O\left(\frac{\sqrt{\gamma_{n}}}{n^{\frac{1}{6}}}\right)
\ll\gamma_{n}^{-1}\log\log n.\endaligned 
\end{equation}  

We further refine the second integral in \eqref{kkkkk} as 
\begin{equation}\label{31*}\aligned
	&\quad \int_{\ell_2(n)}^{+\infty}\sum\limits_{k=0}^{t_{n}}\mathbb{P}(Y_{n-k}>a_{n}+b_{n}x)dx\\
	\le&\int_{\ell_2(n)}^{p_{n}}\sum\limits_{k=0}^{j_{n}-1}\mathbb{P}(Y_{n-k}>a_{n}+b_{n}x)dx+\int_{\ell_2(n)}^{p_{n}}\sum\limits_{k=j_{n}}^{t_{n}}\mathbb{P}(Y_{n-k}>a_{n}+b_{n}x)dx\\
	&+\int_{p_{n}}^{+\infty}\sum_{k=0}^{t_n}\mathbb{P}(Y_{n-k}>a_{n}+b_{n}x)dx.\endaligned 
\end{equation}
Applying Lemmas \ref{ZX1} and \ref{3.1}, we get
\begin{equation}\label{310}\aligned
	\int_{\ell_2(n)}^{p_{n}}\sum\limits_{k=0}^{j_{n}-1}\mathbb{P}(Y_{n-k}>a_{n}+b_{n}x)dx
	=&\int_{\ell_2(n)}^{p_{n}}\sum\limits_{k=0}^{j_{n}-1}\frac{1+o(1)}{\sqrt{2\pi}u_{n}(k,x)}e^{-\frac{u_{n}^{2}(k,x)}{2}}dx\\
	\leq&\int_{\ell_2(n)}^{+\infty}\sum\limits_{k=0}^{+\infty}\frac{1}{u_{n}(k,x)}e^{-\frac{u_{n}^{2}(k,x)}{2}}dx\\
	=&O\left(\frac{\sqrt{n\gamma_{n}}}{u_{n}^{3}(0,\ell_2(n))}e^{-\frac{u_{n}^{2}(0,\ell_2(n))}{2}}\right)\\
	=&O(\gamma_n^{-1})
	\\
	\ll&\gamma_{n}^{-1}\log\log n,
	\endaligned 
\end{equation}
where we use the fact $u_{n}(0,\ell_2(n))=\sqrt{\gamma_{n}}+\frac{\ell_2(n)}{\sqrt{\gamma_{n}}}$ and then  
$$e^{-\frac{u_{n}^{2}(0,\ell_2(n))}{2}}=e^{-\frac12\gamma_n-\log(\sqrt{2\pi}\log n)}(1+o(1))=\frac{1}{\sqrt{n}}(1+o(1)).$$
Next, we work on the second integral in \eqref{31*}. Note that $$ u_{n}(j_{n},\ell_2(n))=\frac{j_{n}}{\sqrt{n}}+\sqrt{\gamma_{n}}+\frac{\ell_2(n)}{\sqrt{\gamma_{n}}}.$$  Using the same method as for (\ref{310}), we obtain 
\begin{equation}\label{311}\aligned
	\int_{\ell_2(n)}^{p_{n}}\sum\limits_{k=j_{n}}^{t_{n}}\mathbb{P}(Y_{n-k}>a_{n}+b_{n}x)dx
	\leq&\int_{\ell_2(n)}^{p_{n}}\sum\limits_{k=j_{n}}^{t_{n}}u_{n}^{-1}(k,x)e^{-\frac{u_{n}^{2}(k,x)}{2}}+O(n^{-\frac{1}{2}}u_{n}^{-3}(k,x))dx\\
	=&O\left(\frac{\sqrt{n\gamma_{n}}}{u_{n}^{3}(j_{n},\ell_2(n))}e^{-\frac{u_{n}^{2}(j_{n},\ell_2(n))}{2}}+\frac{\sqrt{\gamma_{n}}}{u_{n}(j_{n},\ell_2(n))}\right)\\
	=&O(\sqrt{\gamma_{n}}n^{-\frac{1}{10}})\ll\gamma_{n}^{-1}	. \endaligned 
\end{equation}
At last, Lemmas \ref{ZX1} and \ref{3.1} work together to indicate  
\begin{equation}\label{312}\aligned
	\int_{p_{n}}^{+\infty}\sum\limits_{k=0}^{t_{n}}\mathbb{P}(Y_{n-k}>a_{n}+b_{n}x)dx&\leq \int_{p_{n}}^{+\infty}\sum\limits_{k=0}^{t_{n}}(u_{n}^{-1}(k,x)e^{-\frac{u^{2}_{n}(k,x)}{2}}+O(n^{-1/2}u_{n}^{-3}(k,x))dx\\
	&=O\left(\frac{\sqrt{n\gamma_{n}}}{u^{3}_{n}(0,p_{n})}e^{-\frac{u^{2}_{n}(0,p_{n})}{2}}+\frac{\sqrt{\gamma_{n}}}{u_{n}(0,p_{n})}\right)\\
	&=O(n^{-\frac{1}{10}})\ll\gamma_{n}^{-1}. \endaligned 
\end{equation}
Inserting \eqref{310}, \eqref{311} and \eqref{312}  into \eqref{31*}, we know 
$$\int_{\ell_2(n)}^{+\infty}\sum\limits_{k=0}^{t_{n}}\mathbb{P}(Y_{n-k}>a_{n}+b_{n}x)dx\ll \gamma_n^{-1}\log\log n,	
$$
which, combining with \eqref{kkkkk} and \eqref{31&}, confirms \eqref{iii}.
 
\subsection{Proof of \eqref{total1}} 
For $W_1(\mathcal{L}(W_n), F_n),$ we still use the closed formula of $L1$-Wasserstein distance to write 
\begin{align}
	W_{1}(\mathcal{L}(W_n), F_{n}) &=\int_{-\infty}^{+\infty}\left|\mathbb{P}\left(\sqrt{4\gamma_{n}}\left(R_{n}-\sqrt{n}-\sqrt{\frac{\gamma_{n}}{4}}\right)\leq x\right)-\mathbb{P}\left(Y_{(n)}\leq a_{n}+b_{n}x\right)\right|dx\nonumber\\
	&=\int_{-\infty}^{+\infty}\left|\mathbb{P}\big(\sqrt{Y_{(n)}}\leq \frac{x}{\sqrt{4\gamma_{n}}}+\sqrt{n}+\sqrt{\frac{\gamma_{n}}{4}}\big)-\mathbb{P}\left(Y_{(n)}\leq a_{n}+b_{n}x\right)\right|dx.\nonumber
\end{align}
Note that 
\begin{align}
	\left(\frac{x}{\sqrt{4\gamma_{n}}}+\sqrt{n}+\sqrt{\frac{\gamma_{n}}{4}}\right)^{2}&=n+\left(\frac{x}{\sqrt{4\gamma_{n}}}+\sqrt{\frac{\gamma_{n}}{4}}\right)^{2}+2\sqrt{n}\left(\frac{x}{\sqrt{4\gamma_{n}}}+\sqrt{\frac{\gamma_{n}}{4}}\right)\nonumber\\
	&=n+\sqrt{n\gamma_{n}}+\sqrt{\frac{n}{\gamma_{n}}}x+\left(\frac{x}{\sqrt{4\gamma_{n}}}+\sqrt{\frac{\gamma_{n}}{4}}\right)^{2}\nonumber\\
	&\geq a_{n}+b_{n}x\nonumber,
\end{align}
which helps to write 
$$\aligned 
&\quad W_{1}(\mathcal{L}(W_n), F_{n}) \\
&=\int_{-\infty}^{+\infty}\bigg(\mathbb{P}\big(\sqrt{Y_{(n)}}\le \frac{x}{\sqrt{4\gamma_{n}}}+\sqrt{n}+\sqrt{\frac{\gamma_{n}}{4}}\bigg)-\mathbb{P}\left(Y_{(n)}\leq a_{n}+b_{n}x\right)\bigg)dx\\
	&=\int_{y_1}^{+\infty}\mathbb{P}\big(\sqrt{Y_{(n)}}\leq\frac{x}{\sqrt{4\gamma_{n}}}+\sqrt{n}+\sqrt{\frac{\gamma_{n}}{4}}\big) dx-\int_{y_0}^{\infty}\mathbb{P}\left(Y_{(n)}\leq a_{n}+b_{n}x\right)dx,
\endaligned
$$
where $ y_{1}:=-\sqrt{4\gamma_{n}}\left(\sqrt{n}+\sqrt{\frac{\gamma_{n}}{4}}\right)$ is the unique solution to $\frac{x}{\sqrt{4\gamma_{n}}}+\sqrt{n}+\sqrt{\frac{\gamma_{n}}{4}}=0.$ 
Using the substitution $t=\frac{x}{\sqrt{4\gamma_{n}}}+\sqrt{n}+\sqrt{\frac{\gamma_{n}}{4}}$ and $t=a_n+b_n x,$ respectively, we see 
$$\aligned 
W_{1}(\mathcal{L}(W_n), F_{n})
&=\sqrt{4\gamma_n}\int_{0}^{+\infty} \mathbb{P}\big(\sqrt{Y_{(n)}}\le t \big)d t-\sqrt{\frac{\gamma_n}{n}}\int_0^{+\infty}\mathbb{P}(Y_{(n)}\leq t) \,d t\\
	&=\sqrt{\gamma_n}\int_0^{\infty} (\frac{1}{\sqrt{t}}-\frac{1}{\sqrt{n}}) \mathbb{P}(Y_{(n)}\leq t) dt\\
		&\le \sqrt{\gamma_n}\int_0^{n} (\frac{1}{\sqrt{t}}-\frac{1}{\sqrt{n}}) \mathbb{P}(Y_{(n)}\leq t) dt\\
		&\le \sqrt{\gamma_n n}\,\mathbb{P}(Y_{(n)}\le n)\\
		&\le \sqrt{\gamma_n n} \;\mathbb{P}^{[\sqrt{n}]}(Y_{n-[\sqrt{n}]}\le n).
		\endaligned 
$$
Observe $n=a_n-b_n \gamma_n,$ which satisfies $u_n([\sqrt{n}], -\gamma_n)=1+O(n^{-1/2}),$ and then Lemma \ref{ZX1} tells 
 $$ \mathbb{P}(Y_{n-[\sqrt{n}]}>n)=(1-\Phi(1+O(n^{-\frac{1}{2}}))(1+o(1))\geq \frac{1}{3}, $$  
which implies
$$ 2\sqrt{n\gamma_{n}}\mathbb{P}^{[\sqrt{n}]}(Y_{n-[\sqrt{n}]}\leq n)\leq 2\sqrt{n\gamma_{n}}\left(\frac{2}{3}\right)^{[\sqrt{n}]}.$$ 
Therefore,
\begin{equation}\label{wwwww}
	W_{1}(\mathcal{L}(W_n), F_{n})\ll \gamma_n^{-1}.
\end{equation}
The proof of Theorem \ref{TH} is completed now.

\section{Proof of Theorem \ref{th2}}
This section is devoted to the proof of Theorem \ref{th2}. 
Let $F_n$ and $\bar{F}_n$ be the distribution functions of $X_n$ and $W_n,$ respectively and for Theorem \ref{th2}, it suffices to prove 
\begin{equation}\label{789}
	\lim\limits_{n\to\infty}\frac{\log n}{\log\log n}\sup\limits_{x\in\mathbb{R}}|\bar{F}_n(x)-e^{-e^{-x}}|=\frac{2}{e}
\end{equation}
and
\begin{equation}\label{8900}
	\sup\limits_{x\in  \mathbb{R}}|F_{n}(x)-\bar{F}_n(x)|\ll \gamma_{n}^{-1}\log \log n. 
\end{equation}
Now \eqref{8900} is a natural result of $W_1(\mathcal{L}(W_n), F_n)\ll \gamma_n^{-1}$ and we only need to verify \eqref{789}.
Examining the proof for $W_1$ distance, one sees 
\begin{equation}\label{233}\aligned
	\sup\limits_{x\in\mathbb{R}}|\bar{F}_n(x)-e^{-e^{-x}}|=\sup_{x\in (-\ell_1(n), \ell_2(n))}|e^{-\beta_n(x)}-e^{-e^{-x}}|
	\endaligned 
\end{equation}
and then it follows from 
\eqref{betaxexpre} that  
$$\sup\limits_{x\in(-\ell_1(n),\ell_{2}(n))}|e^{-\beta_{n}(x)}-e^{-e^{-x}}|=\gamma_n^{-1}(1+o(1))\sup_{x\in(-\ell_1(n),\ell_{2}(n))}e^{-e^{-x}-x}\big|2(\ell_2(n)-x)-\frac{x^2}{2}\big|.$$
The simple inequality $$2(\ell_2(n)-x)-\frac{x^2}{2}\le \big|2(\ell_2(n)-x)-\frac{x^2}{2}\big|\le 2(\ell_2(n)-x)+\frac{x^2}{2}$$
implies 
$$\aligned &\quad\sup_{x\in(-\ell_1(n),\ell_{2}(n))}e^{-e^{-x}-x}\big|2(\ell_2(n)-x)-\frac{x^2}{2}\big|\\
&\le 2 \sup_{x\in(-\ell_1(n),\ell_{2}(n))}e^{-e^{-x}-x}(\ell_2(n)-x)+\frac12\sup_{x\in(-\ell_1(n),\ell_{2}(n))}e^{-e^{-x}-x}x^2\endaligned $$
and similarly 
$$\aligned &\quad\sup_{x\in(-\ell_1(n),\ell_{2}(n))}e^{-e^{-x}-x}\big|2(\ell_2(n)-x)-\frac{x^2}{2}\big|\\
&\ge 2\sup_{x\in(-\ell_1(n),\ell_{2}(n))}e^{-e^{-x}-x}(\ell_2(n)-x)-\frac12\sup_{x\in(-\ell_1(n),\ell_{2}(n))}e^{-e^{-x}-x}x^2.\endaligned $$ 

Set $$g(x)=e^{-x}+x-\log  (\ell_2(n)-x), \quad -\ell_1(n)\le x\le \ell_2(n)$$
and we are going to identify the minimizer of $g.$  
Now $$g'(x)=1-e^{-x}+\frac{1}{\ell_2(n)-x},$$ which is strictly increasing and $g'(-\ell_1(n))<0$ and $g'(\ell_2(n))=+\infty.$ Thus, $g$ has a unique minimizer, denoted by $z_0.$ 
It is ready to know $g'(0)>0$ and 
$$\aligned g'(-\frac{1}{\ell_2(n)})&=1-e^{\frac{1}{\ell_2(n)}}+\frac{1}{\ell_2(n)+\frac{1}{\ell_2(n)}}\\
&=-\frac{1}{\ell_2(n)}-\frac{1}{2\ell_2^2(n)}+\frac{1}{\ell_2(n)+\frac{1}{\ell_2(n)}}+o(\ell_2^{-2}(n))\\
&<0,
\endaligned $$ 
whence, $$-\frac{1}{\ell_2(n)}<z_0<0.$$ 
As a consequence 
$$g(z_0)<g(0)=1-\log \ell_2(n)$$ and also it follows from the monotonicity of involved functions that 
$$g(z_0)>1-\frac{1}{\ell_2(n)}-\log(\ell_2(n)+\ell_2^{-1}(n))=1-\log\ell_2(n)+o(1).$$
Therefore, 
$$\aligned \sup_{x\in (-\ell_1(n), \ell_2(n))}e^{-e^{-x}-x}(\ell_2(n)-x)=e^{-g(z_0)}=e^{-1} \ell_2(n)(1+o(1)).
\endaligned $$
Also, it is easy to check that 
 $$\sup_{x\in(-\ell_1(n),\ell_{2}(n))}e^{-e^{-x}-x}x^2=O(1).$$
Therefore,
$$ \sup\limits_{x\in\mathbb{R}}|\bar{F}_n(x)-e^{-e^{-x}}|=\frac{2\ell_2(n)}{e\gamma_{n}}(1+o(1)), $$
which completes the proof of \eqref{789}.

\subsection*{Conflict of interest}  The authors have no conflicts to disclose.

\subsection*{Acknowledgment} The authors would like to thank Professor Xinxin Chen from Beijing Normal University for help discussions on topics related to this work.

\end{document}